\date{}
\newtheorem{thm}{Theorem}
\newtheorem{lem}[thm]{Lemma}
\newtheorem{prop}[thm]{Proposition}
\newtheorem{cor}[thm]{Corollary}
\newtheorem{que}[thm]{Question}
\theoremstyle{definition}
\newtheorem{ex}[thm]{\it Example}
\newtheorem{rem}[thm]{\it Remark}
\numberwithin{equation}{section}
\DeclareMathOperator{\ap}{Ap}
\DeclareMathOperator{\minap}{minAp}
\DeclareMathOperator{\maxap}{maxAp}
\title{Numerical semigroups with large embedding dimension satisfy Wilf's conjecture}
\begin{document}

\onehalfspacing

\author{Alessio Sammartano\thanks{{\em email} alessiosammartano@gmail.com}}

\maketitle

\begin{abstract}
\noindent 
We give an affirmative answer to Wilf's conjecture for numerical semigroups satisfying
$ 2 \nu \geq m$, 
where $\nu$ and $m$ are respectively the embedding dimension and the multiplicity of a semigroup.
The conjecture is also proved when $ m \leq 8$ and when the semigroup is generated by a generalized arithmetic sequence.

\medskip

\noindent MSC: 11D04; 20M14.

\end{abstract}

\section*{Introduction}

A classical problem in additive number theory is the \emph{Diophantine Frobenius Problem},
also known as money-changing problem:
given $\nu$ coprime positive integers $g_1, \ldots, g_\nu$ determine the largest integer $f$ which is not representable as a linear combination of $g_1, \ldots, g_\nu$ with coefficients in  $ \mathbb{N}$.
The problem, introduced by Sylvester in \cite{Syl} for the case $\nu=2$, 
has been widely studied in literature;
the monograph \cite{RA} gathers a lot of results on the topic. 

It is natural to study the problem in the context of numerical semigroups, i.e. submonoids of the additive monoid of the natural numbers.
It is indeed possible to provide formulas linking the Frobenius number of a semigroup $f(S)$ to its other invariants. 
With regards to this problem, in 1978 Wilf posed in \cite{Wi} the following question:

\begin{que}\label{conj}
Let $S$ be a numerical semigroup with Frobenius number $f(S)$, embedding dimension $\nu(S)$
and let $n(S)= |S \cap [0,f]|$.
Is it true that $ f(S)+1 \leq n(S)\nu(S)$?
\end{que}

The conjecture is still open, although an affirmative answer has been given for various partial cases  (see \cite{Ba}, \cite{DM} and \cite{FGH}).
Moreover, some computations made in \cite{BA} strengthen our convinction in a positive answer to the conjecture.
\\

In this paper, 
after  some background on numerical semigroups, 
we find an equivalent form of the conjecture (cf. Proposition \ref{equiv}, Remark \ref{ali}).
In particular, 
while results known so far rely heavily on the particular hypotheses (for example symmetric, almost symmetric, three-generatd or maximal embedding dimension  semigroups),
we try to develop a more general method to attack the problem.
We show that it is affirmatively answered by semigroups whose embedding dimension is large with respect to the multiplicity (cf. Theorem \ref{god}).
Finally, 
we note that the conjecture is also verified by  semigroups with small multiplicity (cf. Corollary \ref{fail}) and by those generated by a generalized arithmetic sequence (cf. Proposition \ref{gener}).

A good reference about numerical semigroups is \cite{RG}.

\section{Preliminaries}

Let $\mathbb{N}$ denote the set of natural numbers, including $0$.
A \emph{numerical semigroup} 
is a submonoid of $( \mathbb{N},+)$ with finite complement in it.
Given a numerical semigroup $S$, 
we define a partial order setting  $s \preceq t$ if there exists an element $u \in S$ such that $s+u=t$.
Each numerical semigroup has a unique minimal system of generators 
$ \{ g_1 < g_2 < \ldots < g_\nu\}$ such that every element $s\in S$ is representable as 
$s= \lambda_1 g_1+ \ldots + \lambda_\nu g_\nu$, 
with $\lambda_i \in \mathbb{N}$.
This set coincides with the set of minimal elements in  $S\setminus\{0\}$ with respect to the partial order $\preceq$.

There are several invariants associated to a numerical semigroup $S$. 
The largest integer not belonging to the semigroup is called \emph{Frobenius number} of $S$ and is denoted by $f=f(S)$;
the number $f(S)+1$ is known as the \emph{conductor} of $S$.
The \emph{multiplicity} is defined as $m=m(S)=\min\{ s \in S, \, s>0\}$;
it is clear that $m=g_1$.
The number of generators $\nu=\nu(S)$ is called \emph{embedding dimension}; 
it is not difficult to see that the inequality $\nu \leq m $ holds.
An integer $ x \in \mathbb{Z}\setminus S$ is called a \emph{pseudo-Frobenius number} if $x+s \in S$ for every $ s \in S, \, s\ne 0$; 
the cardinality of the set of pseudo-Frobenius numbers is known as the \emph{type} of the semigroup and is denoted with $t=t(S)$.
We use the symbol $|X|$ to denote the cardinality of a set $X$.
The number $n(S)$ is defined as $n(S)=\big|\{s \in S,\, s < f\}\big|$.

Throughout the paper we will make an extensive use of an important tool associated to a semigroup $S$,
which is known as $\emph{Ap\'ery set}$ of $S$ and is defined as 
\[\ap(S)=\{ w \in S,\, w-m \notin S\}.\]
This set consists of the smallest elements in $S$ in each class of congruence modulo $m$ (\cite{RG}, Lemma 2.4);
it follows that $|\ap(S)|=m$ and $0 \in \ap(S)$.
We name the elements in increasing order setting $\ap(S)=\{ w_0 < w_1 < \ldots < w_{m-1}\}$;
with this notation,
we always have $w_0=0,\, w_1=g_2$ and $w_{m-1}=f+m$ (\cite{RG}, Proposition 2.12). 
It is useful to consider $\ap(S)\setminus\{0\}$ as a partially ordered set, 
with the partial order $\preceq$ induced by $S$:
we can indeed state some properties of $S$ in terms of this poset, as we see in the next result.
In order to do this, we define the two subsets 
\[ \minap(S)=\{w \in \ap(S)\setminus\{0\},\, w \mbox{ is minimal wrt} \preceq\}\]
\[ \maxap(S)=\{w \in \ap(S)\setminus\{0\},\, w \mbox{ is maximal wrt} \preceq\}.\]

\begin{prop}[\cite{Br}, Lemma 3.2]
Let $S$ be a numerical semigroup, then:
\begin{itemize}
\item[(i)] $\minap(S)=\{g_2,\ldots,g_\nu\}$; 

\item[(ii)] $\maxap(S)=\{w, \, w-m \mbox{ is a pseudo-Frobenius number of S}\}$.
\end{itemize}
\end{prop}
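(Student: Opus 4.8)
The plan is to derive both parts from a single elementary translation between the order $\preceq$ and membership in $S$. For any $w \in \ap(S)$ and any $u \in S \setminus \{0\}$ the sum $w+u$ lies in $S$ automatically, so by the very definition of the Apéry set one has $w+u \notin \ap(S)$ if and only if $(w+u)-m = (w-m)+u \in S$. I would also record three facts for repeated use: that $m = g_1 \notin \ap(S)$, since $m - m = 0 \in S$; that conversely every generator $g_i$ with $i \geq 2$ belongs to $\ap(S)$, because $g_i - m \in S$ would exhibit $m \prec g_i$ and contradict the minimality of $g_i$ in the generating set; and that if $w \in \ap(S)$ then in any representation $w = \sum_j \lambda_j g_j$ the coefficient $\lambda_1$ of $m$ must vanish, for otherwise $w - m \in S$.

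For part (i) I would prove the two inclusions separately. Each $g_i$ with $i \geq 2$ lies in $\ap(S) \setminus \{0\}$ by the remark above, and, being a minimal generator, it is minimal in all of $S \setminus \{0\}$ with respect to $\preceq$ (by the description of the minimal generating set recalled in the Preliminaries); a fortiori it is minimal in the subposet $\ap(S) \setminus \{0\}$, so $\{g_2, \ldots, g_\nu\} \subseteq \minap(S)$. For the reverse inclusion I take $w \in \minap(S)$ and write $w = \sum_{j \geq 2} \lambda_j g_j$, the coefficient of $m$ being zero. If $w$ were not itself a generator, then $\sum_{j \geq 2} \lambda_j \geq 2$, and choosing any $j$ with $\lambda_j \geq 1$ gives $w - g_j \in S \setminus \{0\}$, hence $g_j \prec w$ with $g_j \in \ap(S) \setminus \{0\}$, contradicting the minimality of $w$. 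Thus $w$ is one of $g_2, \ldots, g_\nu$.

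For part (ii) I would feed maximality through the same dictionary. An element $w \in \ap(S) \setminus \{0\}$ is maximal exactly when no $u \in S \setminus \{0\}$ satisfies $w + u \in \ap(S)$ (note that $w+u \neq 0$ is automatic), and by the translation of the first paragraph this means $(w-m)+u \in S$ for every $u \in S \setminus \{0\}$. Writing $x = w - m$ and recalling that $x \notin S$ because $w \in \ap(S)$, this is precisely the assertion that $x$ is a pseudo-Frobenius number. Conversely, if $x$ is a pseudo-Frobenius number then $w := x + m \in S$ (apply the defining property to $u = m$), the relation $w - m = x \notin S$ forces $w \in \ap(S)$, and $w \neq 0$ since otherwise $x = -m$ would violate the pseudo-Frobenius condition against $u = g_2$; the defining property of $x$ then returns the maximality of $w$. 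Hence $\maxap(S)$ is exactly the set of $w$ such that $w - m$ is a pseudo-Frobenius number.

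The statement is essentially bookkeeping once this dictionary is in place, and I expect no serious obstacle; the points demanding care are the boundary cases rather than any hard estimate. In part (i) one must produce an element strictly smaller than $w$ that still lies in $\ap(S) \setminus \{0\}$, not merely in $S \setminus \{0\}$, which is exactly why it matters that the generators $g_j$ with $j \geq 2$ were first shown to sit inside the Apéry set. In part (ii) the delicate checks are that the correspondence $w \leftrightarrow w - m$ carries $\maxap(S)$ into nonzero Apéry elements and back, i.e. that $w \in S$ and $w \neq 0$ in the converse direction; both follow by testing the pseudo-Frobenius condition against the specific elements $m$ and $g_2$.
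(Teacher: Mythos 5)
Your proof is correct. The paper itself gives no argument for this proposition --- it is quoted from Bryant's paper (\cite{Br}, Lemma 3.2) --- so there is nothing internal to compare against; your ``dictionary'' $w+u\in\ap(S)\Leftrightarrow (w-m)+u\notin S$ is exactly the standard way to prove both parts, and all the boundary checks you flag (that $g_j\in\ap(S)$ so the smaller element really lies in the subposet, that $\lambda_1=0$ in any representation of an Ap\'ery element, that $w=x+m$ lands in $\ap(S)\setminus\{0\}$ in the converse of (ii)) are handled properly. The only caveat, which every source silently excludes, is the degenerate case $S=\mathbb{N}$, where $\ap(S)\setminus\{0\}=\emptyset$ while $-1$ is still counted as a pseudo-Frobenius number, and where your appeal to $g_2$ is vacuous; for any proper numerical semigroup the argument is complete.
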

We obtain in particular that
 $|\minap(S)|=\nu-1$ and $|\maxap(S)|=t(S)$.
The next property is an easy consequence of the definitions of $\ap(S)$ and $\preceq$:
\begin{lem}[\cite{FGH}, Lemma 6]\label{easy}
If $w \in \ap(S)$ and $u \preceq w$,
then $u \in \ap(S)$.
\end{lem}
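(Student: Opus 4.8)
The plan is to argue by contradiction, using nothing more than the closure of $S$ under addition together with the definition of the Apéry set. First I would unwind the two hypotheses. Since $u \preceq w$, by the definition of $\preceq$ there is an element $v \in S$ with $u + v = w$; in particular $u \in S$. Since $w \in \ap(S)$, by the definition of the Apéry set we have $w - m \notin S$. The goal is to prove $u \in \ap(S)$, which, as $u \in S$ is already known, amounts to showing that $u - m \notin S$.

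The core step is a one-line contradiction. Suppose, to the contrary, that $u - m \in S$. Because $S$ is a submonoid, it is closed under addition, so $(u-m) + v \in S$. But $(u-m) + v = (u+v) - m = w - m$, whence $w - m \in S$, contradicting the fact that $w \in \ap(S)$. Therefore $u - m \notin S$, and consequently $u \in \ap(S)$, as desired.

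I do not expect a genuine obstacle here: the entire content of the statement is that subtracting $m$ commutes with the relation $u + v = w$, so that the Apéry obstruction at $w$ is inherited by $u$. The only point worth noting is that $u - m$ may a priori be negative, namely when $u < m$; this causes no trouble, since then $u - m \notin S$ is immediate from $S \subseteq \mathbb{N}$, and in any case the displayed contradiction handles all values of $u$ uniformly. Thus the proof reduces to the definitions of $\ap(S)$ and $\preceq$ and the closure of $S$ under addition.
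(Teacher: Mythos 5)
Your proof is correct and is exactly the ``easy consequence of the definitions'' that the paper alludes to when citing this lemma from \cite{FGH} without proof: writing $w=u+v$ with $v\in S$ and observing that $u-m\in S$ would force $w-m=(u-m)+v\in S$ is the standard argument. The only minor gloss is that $u\in S$ should be read as part of the hypothesis $u\preceq w$ (the order is defined on $S$) rather than deduced from $u+v=w$, but this does not affect the validity of the argument.
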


The following inequality  is useful for some particular cases:

\begin{prop}[\cite{FGH}, Theorem 22]\label{where}
Let $S$ be a semigroup with notation as above. 
Then we have $f(S)+1 \leq n(S)(t(S)+1)$.
\end{prop}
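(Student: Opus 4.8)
The plan is to recast the desired inequality in terms of the \emph{gaps} of $S$, i.e.\ the elements of $\mathbb{N}\setminus S$, and then to bound their number by distributing them among the pseudo-Frobenius numbers. Writing $g$ for the number of gaps, I first observe that the $f+1$ integers in $[0,f]$ split into those lying in $S$, which number $n(S)$ since $f\notin S$, and the gaps, every one of which lies in $[1,f]$ because $0\in S$ and everything exceeding $f$ belongs to $S$; hence $f+1=n(S)+g$. Consequently the statement $f(S)+1\leq n(S)\big(t(S)+1\big)$ is equivalent to the cleaner inequality $g\leq n(S)\,t(S)$, and it is this that I would aim to establish.

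The key step is to attach to each gap a pseudo-Frobenius number together with a controlled semigroup element. Given a gap $x$, consider the finite nonempty set of gaps $y$ with $y-x\in S$ (it contains $x$ itself). Let $p$ be its largest element; I claim $p$ is a pseudo-Frobenius number. Indeed, for any $s\in S\setminus\{0\}$ the integer $p+s$ cannot be a gap, for otherwise $(p+s)-x=(p-x)+s\in S$ would contradict the maximality of $p$; thus $p+s\in S$, which is exactly the defining property of a pseudo-Frobenius number. Since $f$ is itself pseudo-Frobenius and is the largest such number, we have $p\leq f$, so that $s:=p-x$ satisfies $s\in S$ and $0\leq s\leq p-1\leq f-1$, i.e.\ $s<f$.

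This produces a map sending a gap $x$ to the pair $\big(p,\,s\big)$, where $p$ is a pseudo-Frobenius number and $s$ lies in $\{u\in S:u<f\}$, a set of cardinality $n(S)$. Because $x=p-s$ is recovered from the pair, the map is injective, and therefore
\[
g\ \leq\ \big|\{\text{pseudo-Frobenius numbers}\}\big|\cdot\big|\{u\in S:u<f\}\big|\ =\ t(S)\,n(S).
\]
Combining this with $f+1=n(S)+g$ yields $f(S)+1\leq n(S)+t(S)\,n(S)=n(S)\big(t(S)+1\big)$, as wanted.

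The main obstacle to watch is the passage from a gap to a pseudo-Frobenius number: one must guarantee that such a number always exists above each gap, which is precisely where finiteness of $\mathbb{N}\setminus S$ is used, and one must verify that the associated element $s$ is genuinely counted by $n(S)$, which rests on every pseudo-Frobenius number being at most $f$. A minor secondary point is to dispose of the trivial case $S=\mathbb{N}$ separately, where there are no gaps and the inequality holds vacuously.
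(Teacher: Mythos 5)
Your proof is correct: the decomposition $f+1=n(S)+g$, the observation that every gap $x$ satisfies $x=p-s$ for some pseudo-Frobenius number $p$ (obtained by taking the largest gap $y$ with $y-x\in S$) and some $s\in S$ with $s<f$, and the resulting injection giving $g\leq t(S)\,n(S)$ are all sound. The paper offers no proof of its own here, merely citing \cite{FGH}, and your argument is essentially the classical one from that reference, so there is nothing to compare beyond noting the match.
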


As a consequence, 
every semigroup $S$ such that $ t(S)+1 \leq \nu(S)$ satisfies the conjecture.
Moreover, 
the above inequality has been used in the same paper to prove the next result:

\begin{cor}[\cite{FGH}]\label{frag}
If $\nu(S) \leq 3$, then $S$ satisfies Wilf's conjecture.
\end{cor}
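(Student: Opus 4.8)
The plan is to deduce everything from Proposition \ref{where}, which already yields $f(S)+1\le n(S)\bigl(t(S)+1\bigr)$. Comparing this with the statement of the conjecture, it suffices to show that whenever $\nu(S)\le 3$ one has $t(S)+1\le\nu(S)$, i.e. $t(S)\le\nu(S)-1$; the conclusion then follows immediately from the observation recorded just after Proposition \ref{where}. Thus the corollary reduces to a purely structural fact bounding the type of a semigroup by its number of generators, and I would proceed by a short case analysis on $\nu(S)\in\{1,2,3\}$, using the description $|\maxap(S)|=t(S)$.

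When $\nu(S)=1$ the semigroup is generated by a single element coprime to the others, forcing $S=\mathbb{N}$, so that $f(S)=-1$ and the inequality is vacuous. When $\nu(S)=2$, write $S=\langle g_1,g_2\rangle$ with $\gcd(g_1,g_2)=1$; by the classical result of Sylvester such a semigroup is symmetric, equivalently it has a unique pseudo-Frobenius number, so $t(S)=1$ and $t(S)+1=2=\nu(S)$. One can also see this directly from the Apéry set: taking the Apéry set with respect to $m=g_1$, the elements $0,g_2,2g_2,\ldots,(g_1-1)g_2$ exhaust all residue classes modulo $g_1$ and form a single chain under $\preceq$, since $kg_2\preceq(k+1)g_2$; hence $\maxap(S)$ is a singleton and $t(S)=1$.

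The remaining and genuinely delicate case is $\nu(S)=3$. Here I would invoke the structure theory of three-generated numerical semigroups: by a theorem of Herzog, a numerical semigroup minimally generated by three elements is either a complete intersection, hence symmetric with type $1$, or has type exactly $2$. In either case $t(S)\le 2$, whence $t(S)+1\le 3=\nu(S)$, and Proposition \ref{where} yields the conjecture. The main obstacle is precisely establishing this bound $t(S)\le 2$: unlike the cases $\nu(S)\le 2$, it does not follow from an elementary description of the Apéry poset but rests on the analysis of the minimal presentation, or equivalently the syzygies, of a three-generated semigroup, from which one concludes that $\maxap(S)$ contains at most two elements. Granting this input, the proof collapses to a one-line comparison of inequalities.
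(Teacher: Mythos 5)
Your proposal is correct and takes essentially the same route the paper intends: the corollary is quoted from \cite{FGH}, and the paper's surrounding text makes explicit that it follows from Proposition \ref{where} once one knows $t(S)+1\le\nu(S)$ whenever $\nu(S)\le 3$, which is exactly your reduction (trivial for $\nu=1$, symmetry hence $t=1$ for $\nu=2$, and the Herzog/Fr\"oberg--Gottlieb--H\"aggkvist bound $t(S)\le 2$ for $\nu=3$). The structural input you invoke for the three-generated case is the same external result used in \cite{FGH}, so nothing is missing.
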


We are now able to prove two results which will be used afterwards.

\begin{lem}\label{unfor}
If $m(S) - \nu(S) \leq 2$, then  $S$ satisfies Wilf's conjecture.
\end{lem}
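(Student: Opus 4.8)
The plan is to derive Wilf's conjecture from Proposition \ref{where}: since that result already gives $f(S)+1 \le n(S)\big(t(S)+1\big)$, it suffices to prove that the hypothesis $m-\nu \le 2$ forces $t(S)+1 \le \nu(S)$, i.e.\ $t(S) \le \nu(S)-1$. I would phrase everything in terms of the poset $\big(\ap(S)\setminus\{0\},\preceq\big)$, which has $m-1$ elements; by the cited descriptions of $\minap(S)$ and $\maxap(S)$ it has exactly $\nu-1$ minimal elements and exactly $t$ maximal ones. Hence the number of non-minimal elements equals $(m-1)-(\nu-1)=m-\nu\le 2$, and the whole argument reduces to a case analysis on this number $k\in\{0,1,2\}$.

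First I would record the structural fact that drives the proof: if $y \in \ap(S)\setminus\{0\}$ is non-minimal and the only generator lying strictly below it is a single $g_i$, then $y$ is a multiple of $g_i$. Indeed, since $y-m\notin S$ every expression of $y$ as a sum of generators avoids $g_1=m$, and each generator actually occurring in such an expression lies $\preceq y$; if $g_i$ is the only such generator, then $y=\lambda g_i$ with $\lambda\ge 2$. The consequence I need is that two incomparable non-minimal elements cannot both lie above one and the same single generator, since otherwise both would be multiples of $g_i$ and hence comparable.

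Then I would run the cases. For $k=0$ every nonzero Apéry element is minimal, so the poset is an antichain and all $\nu-1$ of its elements are also maximal, giving $t=\nu-1$. For $k=1$ the unique non-minimal element $y$ is automatically maximal (nothing non-minimal lies above it), while being non-minimal forces at least one generator below $y$, which is then non-maximal; counting yields $t\le 1+(\nu-2)=\nu-1$. For $k=2$, with non-minimal elements $y_1,y_2$, I split according to comparability: if $y_1\preceq y_2$ then a generator below $y_1$ is also below $y_2$, so at least one generator is non-maximal and $t\le 1+(\nu-2)=\nu-1$; if $y_1,y_2$ are incomparable then both are maximal, and by the structural fact at least two distinct generators lie below $\{y_1,y_2\}$, so at least two generators are non-maximal and $t\le 2+\big((\nu-1)-2\big)=\nu-1$.

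The main obstacle is exactly the incomparable sub-case of $k=2$: a naive count there only gives $t\le \nu$, which is too weak for Proposition \ref{where}. Everything hinges on ruling out the configuration of two incomparable non-minimal elements sitting above a single common generator, and this is precisely what the ``multiple of $g_i$'' observation forbids. Once that configuration is excluded the counting closes in all cases, establishing $t\le\nu-1$ and therefore $f(S)+1\le n(S)\nu(S)$.
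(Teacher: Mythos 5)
Your proposal is correct and follows essentially the same route as the paper: both reduce the statement to showing $t(S)+1\leq\nu(S)$ via a case analysis on the number $m-\nu\in\{0,1,2\}$ of non-generators in $\ap(S)\setminus\{0\}$, and then invoke Proposition \ref{where}. Your handling of the two-non-generator case (ruling out two incomparable non-minimal elements over a single common generator via the ``multiple of $g_i$'' observation) is a slightly cleaner variant of the paper's decomposition $u=g_h+g_i$, $v=g_j+g_k$ or $v=u+g_j$, but the underlying idea is the same.
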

\begin{proof}
Let us distinguish three cases.
\begin{itemize}
\item
If $\nu=m$, 
then $\ap(S)\setminus\{0\}=\maxap(S)=\{g_2,\ldots,g_\nu\}$ and  $t=\nu-1$.

\item
If $\nu=m-1$, 
then $\ap(S)\setminus\{0\}=\{g_2,\ldots,g_\nu,u\}$ with $g_i \preceq u$ for at least one index $i\in \{2, \ldots, \nu\}$;
it follows that $g_i \notin \maxap(S)$ and $t \leq \nu-1$.

\item
If $\nu=m-2$, 
then $\ap(S)\setminus\{0\}=\{g_2,\ldots,g_\nu,u,v\}$
with $u < v$.
Since $u$ and $v$ are not generators, 
we have either $u=g_h+g_i,\,v=g_j+g_k$ for some indexes such that $\{h,i\}\ne \{j,k\},$ or $v=u+g_j$.
In both cases we find in $\ap(S)\setminus\{0\}$ two elements that are not maximal and hence $t \leq \nu -1$.
\end{itemize}
In each case we have $t(S)+1\leq \nu(S)$, 
hence we can apply Proposition \ref{where}.
\end{proof}
The technique used in the last Lemma cannot be generalized to higher values of $m-\nu$,
since the inequality $t+1 \leq \nu$ does not hold in general,
as the following example shows.
\begin{ex}
Let $S=\langle 7,8,10,19\rangle$.
Then $m-\nu=7-4=3$. 
The Ap\'ery set is given by
\[\ap(S)=\{0,8,10,16,18,19,20\}\]
and the maximal elements are $\maxap(S)=\{16,18,19,20\}$, thus $t(S)=\nu(S)=4$ and we cannot apply Proposition \ref{where}.
\end{ex}
\begin{cor}\label{ten}
If $m(S) \leq 6$, then $S$ satisfies Wilf's conjecture.
\end{cor}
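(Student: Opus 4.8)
The plan is to reduce everything to the two results that immediately precede this corollary, namely Corollary \ref{frag} (which settles the case $\nu(S) \leq 3$) and Lemma \ref{unfor} (which settles the case $m(S) - \nu(S) \leq 2$). Together these two statements already dispose of a large portion of all semigroups; the strategy is simply to observe that, under the hypothesis $m(S) \leq 6$, every semigroup must fall into at least one of these two regimes, so no further work is needed.

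Concretely, I would argue by contradiction on the complementary region. Suppose $S$ satisfies neither hypothesis, i.e. $\nu(S) \geq 4$ and $m(S) - \nu(S) \geq 3$. Adding these and using $m(S) = \bigl(m(S)-\nu(S)\bigr) + \nu(S)$, one gets
\[
m(S) \;=\; \bigl(m(S)-\nu(S)\bigr) + \nu(S) \;\geq\; 3 + 4 \;=\; 7,
\]
which contradicts $m(S) \leq 6$. Hence whenever $m(S) \leq 6$ we must have $\nu(S) \leq 3$ or $m(S)-\nu(S) \leq 2$, and in either case Wilf's conjecture holds for $S$ by Corollary \ref{frag} or Lemma \ref{unfor} respectively.

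I do not expect any genuine obstacle here: the entire content is the numerical observation that the two uncovered conditions, $\nu \geq 4$ and $m - \nu \geq 3$, together force $m \geq 7$, so that the multiplicity bound $m \leq 6$ lies exactly on the threshold where the two earlier results suffice. The only point worth double-checking is that the threshold is sharp, which is consistent with the preceding example $S = \langle 7,8,10,19\rangle$ having $m = 7$ and falling outside both regimes.
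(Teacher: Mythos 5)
Your proof is correct and is essentially identical to the paper's: the paper also observes that $m \leq 6$ forces $\nu \leq 3$ or $m - \nu \leq 2$ and then invokes Corollary \ref{frag} and Lemma \ref{unfor}. You have merely spelled out the arithmetic ($\nu \geq 4$ and $m-\nu \geq 3$ would give $m \geq 7$) that the paper leaves implicit.
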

\begin{proof}
If $m \leq 6$, 
then either $\nu \leq 3 $ or $m - \nu \leq 2$ and  the thesis follows from Corollary \ref{frag} and Lemma \ref{unfor}.
\end{proof}

\section{Main Results}

Our aim is to develop a method  based on the idea of counting the elements of $S$ in some intervals of length $m$.
Given an integer $k \geq 0$, we define $k$-th \emph{interval} the set
\[I_k=\big[km,(k+1)m-1\big]=\big\{km, km+1, \ldots, (k+1)m-1\big\} \]
and let $n_k=\big|\{s \in S\cap I_k,\, s <f\}\big|$.
We express the conductor of the semigroup in the form $f(S)+1=Lm+\rho$, 
where $1 \leq \rho \leq m$ and
$L= \big\lfloor\frac{f}{m}	\big\rfloor = \big\lfloor\frac{w_{m-1}}{m}	\big\rfloor -1$.
We notice that $L$ is the index of the last interval $I_k$ such that $I_k \not\subseteq S$, 
that is to say the only index such that $f(S) \in I_k$.
The next proposition states basic properties of the $n_k$'s whose proofs are immediate:

\begin{prop}\label{silence}
We have:
\begin{enumerate}
\item[(i)] $1 \leq n_k \leq m-1 $ for $k=0,\ldots, L$;
\item[(ii)] $n_k= | S \cap I_k|$ for $k=0,\ldots, L-1$;
\item[(iii)] $n_{k_1} \leq n_{k_2} $ if $0 \leq k_1 < k_2 \leq L-1$;
\item[(iv)] $n(S) = \sum_{k=0}^L n_k$.
\end{enumerate}
\end{prop}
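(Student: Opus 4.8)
The plan is to exploit repeatedly the single structural fact that $S$ is closed under addition of the multiplicity: every multiple of $m$ lies in $S$ and $S+m\subseteq S$, so for any $j\geq 0$ the translation $s\mapsto s+jm$ maps $S$ into $S$ and carries $I_k$ bijectively onto $I_{k+j}$. All four statements reduce to this observation combined with the defining relations $f\geq Lm$ (from $f+1=Lm+\rho$ with $\rho\geq 1$) and $f\in I_L\setminus S$.

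I would first dispatch (ii). For $k\leq L-1$ every element of $I_k$ is at most $(k+1)m-1\leq Lm-1\leq f-1<f$, so the constraint $s<f$ in the definition of $n_k$ is vacuous and $n_k=|S\cap I_k|$ at once. Then for (i) the lower bound is clear, since $km\in S\cap I_k$ and $km<f$ for every $k\leq L$: for $k\leq L-1$ this is $km<Lm\leq f$, while for $k=L$ one combines $Lm\in S$, $f\notin S$, and $Lm\leq f$ to get $Lm<f$. For the upper bound I would exhibit, inside each interval, an explicit integer omitted by $S$: the translate $f-(L-k)m$ lies in $I_k$, and if it belonged to $S$ then adding $(L-k)m\in S$ would force $f\in S$, a contradiction. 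Hence $I_k\not\subseteq S$ and $n_k\leq m-1$ for every $k\leq L$.

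For (iii) the translation is used directly: if $0\leq k_1<k_2\leq L-1$, then $s\mapsto s+(k_2-k_1)m$ is an injection of $S\cap I_{k_1}$ into $S\cap I_{k_2}$, so $|S\cap I_{k_1}|\leq|S\cap I_{k_2}|$, and (ii) turns this into $n_{k_1}\leq n_{k_2}$. Finally (iv) is a partition argument: every $s\in S$ with $s<f$ satisfies $0\leq s\leq(L+1)m-2$, so it lies in exactly one of the pairwise disjoint intervals $I_0,\ldots,I_L$; since each such $s$ is counted in precisely one $n_k$, summing over $k=0,\ldots,L$ recovers $n(S)$.

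The arguments are entirely routine; the only point requiring a moment's care is the upper bound in (i), where one must produce a missing element of $S$ inside \emph{each} interval rather than appeal only to $f\in I_L$. The uniform translate $f-(L-k)m$ accomplishes this, and every other step is a one-line consequence of closure under adding $m$.
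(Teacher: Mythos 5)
Your proof is correct, and it is exactly the routine verification the paper has in mind: the paper offers no proof at all, stating only that the properties are ``immediate,'' and your arguments (closure of $S$ under adding $m$, the translate $f-(L-k)m$ as a witness for the upper bound, the partition of $[0,(L+1)m-1]$ by the $I_k$) are the standard way to fill in those details. Nothing is missing; in particular you correctly handled the one point needing care, namely that $Lm<f$ strictly and that every interval $I_k$ with $k\leq L$ omits some element of $S$.
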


Now we express Question \ref{conj} in an equivalent form, 
in terms of the quantities introduced so far.

\begin{prop}\label{equiv}
A semigroup $S$ satisfies Wilf's conjecture if and only if 
\begin{equation}\label{formula}
\sum_{k=0}^{L-1}(n_k\nu-m) + (n_L \nu -\rho) \geq 0 .
\end{equation}
\end{prop}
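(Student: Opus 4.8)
The plan is to show that inequality (\ref{formula}) is nothing more than an algebraic rewriting of the defining inequality of Wilf's conjecture, so that the two are literally equivalent. The whole proof amounts to substituting the two structural identities we have already established and regrouping terms. I would therefore begin by recalling that, by Question \ref{conj}, the semigroup $S$ satisfies Wilf's conjecture precisely when $f(S)+1 \leq n(S)\nu(S)$, and then bring in the two ingredients that translate the invariants $f(S)+1$ and $n(S)$ into the language of the intervals $I_k$.

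The first ingredient is the chosen presentation of the conductor, namely $f(S)+1 = Lm + \rho$ with $1 \leq \rho \leq m$. The second is Proposition \ref{silence}(iv), which gives
\[
n(S) = \sum_{k=0}^{L} n_k .
\]
Substituting both into $f(S)+1 \leq n(S)\nu$, Wilf's conjecture becomes $Lm + \rho \leq \nu \sum_{k=0}^{L} n_k$, or equivalently $\nu \sum_{k=0}^{L} n_k - Lm - \rho \geq 0$. The step I would then carry out is to split off the last summand $\nu n_L$ and distribute the quantity $Lm$ as a sum of $L$ copies of $m$ indexed by $k = 0, \ldots, L-1$, writing $Lm = \sum_{k=0}^{L-1} m$. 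Pairing the term $\nu n_k$ with the corresponding $-m$ for each $k$ in this range, and pairing the leftover $\nu n_L$ with $-\rho$, the inequality reorganizes exactly into
\[
\sum_{k=0}^{L-1}(n_k\nu - m) + (n_L\nu - \rho) \geq 0 ,
\]
which is (\ref{formula}). Reading the chain of equivalences backward gives the converse, completing the proof.

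Since every manipulation above is reversible, the argument is a direct equivalence rather than a one-directional estimate, and there is no genuine analytic obstacle. The only point that requires care is the bookkeeping at the last interval: the index $k$ runs from $0$ to $L-1$ in the sum that contributes the $-m$ terms, so that exactly $L$ copies of $m$ are subtracted, while the interval $I_L$ containing $f(S)$ is treated separately and contributes $-\rho$ instead of $-m$. This asymmetric treatment of $I_L$ is precisely what matches the decomposition $f(S)+1 = Lm + \rho$, and verifying that the counts of intervals line up is the one place where a sign or an off-by-one slip could occur; once that is checked, the equivalence is immediate.
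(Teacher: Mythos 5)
Your proposal is correct and follows essentially the same route as the paper: substitute $f(S)+1 = Lm+\rho$ and Proposition \ref{silence}(iv), write $Lm$ as $\sum_{k=0}^{L-1} m$, and regroup termwise, noting that all steps are reversible equivalences. No gaps.
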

\begin{proof}
Using Proposition \ref{silence} we have the following equivalences:
\begin{eqnarray*}
 f(S)+1 \leq n(S)\nu(S)  & \Leftrightarrow & Lm + \rho  \leq \nu\sum_{k=0}^L n_k\quad  \Big( = \sum_{k=0}^{L-1} n_k \nu + n_L \nu \Big)  \\
\Leftrightarrow \sum_{k=0}^{L-1} m + \rho \leq \sum_{k=0}^{L-1} n_k \nu + n_L \nu & \Leftrightarrow & \sum_{k=0}^{L-1}(n_k\nu-m) + (n_L \nu -\rho) \geq 0 .
\end{eqnarray*}
\end{proof}

\begin{rem}\label{ali}
In order to prove Wilf's conjecture, 
by means of Proposition \ref{equiv}, 
we may compute the number of intervals with a fixed amount of elements of $S$ less than $Lm$, 
and estimate thus the first part of the sum in \ref{formula}.
More precisely, 
if we consider the quantities
\[ \epsilon_j= \big|\big\{ k \in \mathbb{N}, \, | I_k \cap S|=j,\, k =0, \ldots, L-1\big\}\big|,\, \mbox{ with } j\in \{1, \ldots, m-1 \}\]
then, 
by expanding the sum and gathering the terms with the same value of $n_k$,
 we have 
\[ \sum_{k=0}^{L-1}(n_k\nu-m) = \sum_{j=1}^{m-1} \epsilon_j (j \nu -m). \]
\end{rem}

With the intent of the last remark, 
 we define another similar family of numbers:
\[\eta_j=\big| \big\{ k \in \mathbb{N}, \, |I_k\cap S|=j\big\}\big|,\, \mbox{ with } j\in \{1,\ldots,m-1\}\]
$\eta_j$ is the number of intervals $I_k$ with exactly $j$ elements of $S$ 
(not necessarily less than $Lm$).
The next lemma shows how the two families are related:

\begin{lem}\label{sheep}
Under the above notation, 
we have:
\begin{itemize}
\item $\epsilon_j=\eta_j\quad$  for  $ j \ne | I_L \cap S|$;

\item $\epsilon_j=\eta_j-1\quad$ for $ j = | I_L \cap S|$.
\end{itemize}
\end{lem}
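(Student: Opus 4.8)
The plan is to read off the result directly from the two definitions, noting that $\epsilon_j$ and $\eta_j$ count intervals by the same criterion ($|I_k\cap S|=j$) but over different ranges of the index $k$: the sum defining $\epsilon_j$ runs over $k\in\{0,\ldots,L-1\}$, whereas $\eta_j$ imposes no upper bound on $k$. Hence $\eta_j$ counts, on top of the intervals counted by $\epsilon_j$, exactly the single interval $I_L$ together with all intervals $I_k$ with $k>L$. The whole argument then reduces to understanding the contribution of these extra intervals.

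First I would dispose of the tail $k\geq L+1$. The least element of such an interval is $km\geq (L+1)m = Lm+m \geq Lm+\rho = f+1$, using $\rho\leq m$, so every element of $I_k$ is strictly greater than $f$ and therefore belongs to $S$. Consequently $I_k\subseteq S$ and $|I_k\cap S|=m$. Since $\eta_j$ is only considered for $j\in\{1,\ldots,m-1\}$, these intervals contribute $0$ to every $\eta_j$ and may be discarded. Next I would analyze the remaining interval $I_L$. Because $f\in I_L$ and $f\notin S$, we have $I_L\not\subseteq S$, so $|I_L\cap S|\leq m-1$; on the other hand Proposition \ref{silence}(i) gives $n_L\geq 1$, and since $n_L\leq |I_L\cap S|$ this forces $|I_L\cap S|\geq 1$. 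Thus $|I_L\cap S|$ is a genuine index in $\{1,\ldots,m-1\}$, and $I_L$ adds $1$ to $\eta_j$ precisely when $j=|I_L\cap S|$ and $0$ otherwise.

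Combining the two observations, for each $j\in\{1,\ldots,m-1\}$ we get $\eta_j=\epsilon_j$ when $j\neq |I_L\cap S|$ and $\eta_j=\epsilon_j+1$ when $j=|I_L\cap S|$, which is exactly the claimed pair of identities. This is essentially a bookkeeping statement, so I do not expect a serious obstacle; the one point that genuinely requires care is the treatment of the boundary interval $I_L$, namely verifying that $1\leq |I_L\cap S|\leq m-1$ so that $I_L$ is counted by some $\eta_j$ with a valid index, and confirming that no interval beyond $I_L$ can re-enter the range $\{1,\ldots,m-1\}$.
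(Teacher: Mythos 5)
Your proof is correct and follows the same idea as the paper, which simply observes that the only difference between the two definitions is the interval $I_L$ (the paper leaves the verification that the tail intervals $I_k$, $k>L$, contribute nothing and that $1\leq |I_L\cap S|\leq m-1$ implicit, while you spell it out). No issues.
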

\begin{proof}
The thesis is straightforward as the only difference in the two definitions is made by the interval $I_L$.
\end{proof}
The following proposition allows us to express the numbers $\eta_j$ in terms of the Ap\'ery set.

\begin{prop}\label{mas}
For any $j=1,\ldots,m-1,$ we have
$\eta_j=\big\lfloor \frac{w_j}{m}\big\rfloor - \big\lfloor \frac{w_j-1}{m}\big\rfloor$. 
\end{prop}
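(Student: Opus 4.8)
The plan is to reduce everything to understanding, residue class by residue class modulo $m$, exactly which intervals $I_k$ contain an element of $S$ in that class. First I would record the membership criterion supplied by the Ap\'ery set: since $m\in S$ and $w_i$ is the least element of $S$ congruent to $w_i$ modulo $m$, an integer $x$ with $x\equiv w_i \pmod m$ lies in $S$ if and only if $x\geq w_i$. Each interval $I_k=[km,(k+1)m-1]$ contains exactly one integer in every residue class, so the representative of the class of $w_i$ inside $I_k$ is $km+(w_i \bmod m)$, and this representative belongs to $S$ precisely when $km+(w_i\bmod m)\geq w_i$, that is, when $k\geq \lfloor w_i/m\rfloor$. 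Summing over the $m$ classes yields the counting identity
\[
|I_k\cap S| \;=\; \big|\{\,i\in\{0,\dots,m-1\} : \lfloor w_i/m\rfloor \leq k\,\}\big|.
\]

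Next I would exploit monotonicity. Because the elements are indexed so that $w_0<w_1<\cdots<w_{m-1}$, the sequence $\lfloor w_i/m\rfloor$ is non-decreasing in $i$; hence for every $k$ the set $\{i:\lfloor w_i/m\rfloor\leq k\}$ is an initial segment of indices, and its cardinality equals $|I_k\cap S|$. Consequently $k\mapsto |I_k\cap S|$ is a non-decreasing step function, and it takes the value $j$ on a (possibly empty) block of consecutive indices. Tracking the two endpoints of that block, $|I_k\cap S|=j$ holds exactly when $w_{j-1}$ has already been ``switched on'' but $w_j$ has not, i.e.\ when $\lfloor w_{j-1}/m\rfloor \leq k < \lfloor w_j/m\rfloor$. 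The base values $w_0=0$ and $w_1=g_2>m$ keep this description valid down to $j=1$.

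Finally I would count the integers $k$ lying in $[\lfloor w_{j-1}/m\rfloor,\ \lfloor w_j/m\rfloor)$, which number $\lfloor w_j/m\rfloor-\lfloor w_{j-1}/m\rfloor$; by the definition of $\eta_j$ this is exactly the desired value. The two points requiring care, and the step I expect to be the main obstacle to phrase cleanly, are the membership criterion of the first paragraph (the ``escalator'' property that the elements of $S$ in each congruence class are precisely the integers at or above its Ap\'ery element) together with the monotonicity argument guaranteeing that the indices $i$ with $\lfloor w_i/m\rfloor\leq k$ form an initial segment, since it is this structural fact that converts the raw cardinality into a telescoping difference of two floors.
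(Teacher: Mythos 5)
Your proof is correct and follows essentially the same route as the paper: both arguments use the characterization of $\ap(S)$ as the minimal elements of $S$ in each congruence class modulo $m$ to show that $|I_k\cap S|$ equals the number of Ap\'ery elements below $(k+1)m$ (your condition $\lfloor w_i/m\rfloor\leq k$ is exactly the paper's $w_i<(k+1)m$), and then both exploit the increasing indexation $w_0<w_1<\cdots<w_{m-1}$ to convert this count into the difference of floors. The only cosmetic difference is that the paper phrases the key step as a bijection from $I_k\cap S$ onto $\{w\in\ap(S): w<(k+1)m\}$ via minimal representatives, while you argue residue class by residue class; the content is identical.
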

\begin{proof} Let us fix an interval $I_k$ and $j \in \{1, \ldots,m-1\}$;
we claim that $I_k$ contains at least $j$ elements of $S$ if and only if $w_{j-1} < (k+1)m$.
In this case, 
the interval $I_k$ contains exactly $j$ elements of $S$ if and only if $w_{j-1} < (k+1)m \leq w_{j}$ and the thesis follows by definition of $\eta_j$.
Let us prove the claim. 
Set $I_k\cap S=\{s_1,\ldots, s_p\}$ and define
 $s'_h=\min\{x \in S, x \equiv s_h \pmod{m}\}$   
 for each $h=1, \ldots, p$:
  by the characterization of $\ap(S)$
 it follows that $\{ s'_1, \ldots, s'_p\} \subseteq \ap(S)$;
 moreover $ s'_h \leq s_h<(k+1)m$.
Conversely,
if $w\in \ap(S)$ and $w <(k+1)m$,
then $w+\lambda m \in S\cap I_k$ for a suitable $\lambda \in \mathbb{N}$ and so $w = s'_h$ for some $h\leq p$. 
Therefore,  
 $\{ s'_1, \ldots, s'_p\}$ is the subset of $\ap(S)$ consisting of all the elements
 less than  $(k+1)m$.
Recalling that the elements in $\ap(S)$ are listed in increasing order we may conclude the proof:
\\

$ |I_k\cap S| \geq j \Leftrightarrow p \geq j \Leftrightarrow w_{j-1} \in \{s'_1, \ldots, s'_p\} \Leftrightarrow
 w_{j-1} < (k+1)m.$
\end{proof}

Now we need some technical lemmas that will be necessary in the main theorem of the paper.

\begin{lem}\label{nok}
Let us suppose  $m-\nu \geq 2$.
Then we have
$ \big\lfloor \frac{w_{\nu+1}}{m} \big\rfloor \geq \big\lfloor \frac{w_1}{m} \big\rfloor + \big\lfloor \frac{w_2}{m} \big\rfloor$. 
\end{lem}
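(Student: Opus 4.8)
The plan is to reduce the floor inequality to the single numerical inequality $w_{\nu+1} \ge w_1 + w_2$ in the Apéry set, from which the statement follows at once: the floor function is superadditive, $\big\lfloor \frac{x+y}{m}\big\rfloor \ge \big\lfloor \frac{x}{m}\big\rfloor + \big\lfloor \frac{y}{m}\big\rfloor$, and monotone, so
$\big\lfloor \frac{w_{\nu+1}}{m}\big\rfloor \ge \big\lfloor \frac{w_1+w_2}{m}\big\rfloor \ge \big\lfloor \frac{w_1}{m}\big\rfloor + \big\lfloor \frac{w_2}{m}\big\rfloor$.
Thus the whole problem becomes locating $w_1+w_2$ at or below the position of $w_{\nu+1}$ in the ordered Apéry set.

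First I would exploit the count of generators versus reducible elements. By the description of $\minap(S)$ recalled above, the elements of $\ap(S)\setminus\{0\}$ that are minimal with respect to $\preceq$ are exactly $g_2,\ldots,g_\nu$, hence there are $\nu-1$ of them, while $|\ap(S)\setminus\{0\}|=m-1$. Since $m-\nu\ge 2$ we have $\nu+1\le m-1$, so $w_{\nu+1}$ is defined; moreover, among the $\nu+1$ smallest nonzero Apéry elements $w_1,\ldots,w_{\nu+1}$ at least two must be non-minimal (reducible), because only $\nu-1$ nonzero Apéry elements in total are minimal. Letting $u<v$ be the two smallest reducible elements of $\ap(S)\setminus\{0\}$, it follows that $u$ and $v$ occupy two of the first $\nu+1$ positions of the increasing enumeration, so $v\le w_{\nu+1}$.

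Next I would bound $v$ from below. Any reducible $w\in\ap(S)\setminus\{0\}$ admits a factorization $w=c+d$ with $c,d\in S\setminus\{0\}$; since $c,d\preceq w$, Lemma \ref{easy} forces $c,d\in\ap(S)\setminus\{0\}$, whence $c,d\ge w_1$. Applied to $u$ this yields $u\ge 2w_1$, and since $u<v$ we obtain $v>2w_1$. Applying the same factorization to $v=c+d$ with $c\le d$, both summands are $\ge w_1$, and they cannot both equal $w_1$ (else $v=2w_1$, contradicting $v>2w_1$); hence $d>w_1$, so $d\ge w_2$, giving $v=c+d\ge w_1+w_2$. Chaining $w_{\nu+1}\ge v\ge w_1+w_2$ closes the argument.

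The step I expect to require the most care is the exclusion of the degenerate factorization $v=2w_1$: everything hinges on first extracting $u\ge 2w_1$ for the \emph{smaller} reducible element and then using $u<v$ to push $v$ strictly past $2w_1$, so that in a factorization of $v$ the larger summand is genuinely at least $w_2$ rather than merely $w_1$. The counting step identifying $v\le w_{\nu+1}$ is where the hypothesis $m-\nu\ge 2$ enters, and I would state carefully that the two smallest reducible Apéry elements necessarily lie among the first $\nu+1$ entries of the ordered Apéry set.
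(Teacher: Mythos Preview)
Your proof is correct and follows essentially the same route as the paper: identify the two smallest non-minimal Ap\'ery elements $u<v$, place them among $w_1,\ldots,w_{\nu+1}$ by the pigeonhole count against the $\nu-1$ minimal elements, and conclude $w_{\nu+1}\ge v\ge w_1+w_2$, whence the floor inequality. Your write-up is in fact more explicit than the paper's at the one delicate point, namely excluding the factorization $v=w_1+w_1$ via $v>u\ge 2w_1$; the paper asserts $v=w_j+w_k\ge w_1+w_2$ somewhat tersely, relying on the same implicit reasoning.
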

\begin{proof}
Since $m-\nu \geq 2$, 
there are two non-zero elements in $\ap(S)$ that are not generators,
that is, two elements in $\ap(S) \setminus \{ 0, g_2, \ldots, g_\nu\}$:
let $u,\,v$ be the smallest of such elements, with $u<v$.
Since $u$ and $v$ are not minimal in $S\setminus\{0\}$ with respect to $\preceq$, 
then $u=u_1+u_2$, $v=v_1+v_2$ with $u_1,\,u_2,\,v_1,\,v_2$ positive elements of $S$;
by Lemma \ref{easy} these elements must belong to the Ap\'ery set and hence we can write
  $ u= w_h + w_i, \, v = w_j+w_k, $ with $h,\,i,\,j,\,k >0$.
Notice that the case $u\preceq v$, i.e. $v=u+w_j$, is not excluded:
 it may occur, under this notation, 
  that $u=w_k$.
For the choice of $u,\,v,$ we have $u \leq w_\nu $ and $v \leq w_{\nu+1} $.
Finally, 
by
$ u= w_h + w_i \geq w_1 + w_1$ and $v = w_j+w_k \geq w_1+ w_2$
we obtain:
\[ w_{\nu+1}\geq v \geq w_1+w_2 \Rightarrow \Big\lfloor \frac{w_{\nu+1}}{m} \Big\rfloor \geq \Big\lfloor \frac{w_1}{m} \Big\rfloor + \Big\lfloor \frac{w_2}{m} \Big\rfloor. \]
\end{proof}

\begin{lem}\label{make}
Let us suppose $m-\nu \geq 2$.
If $ \big\lfloor \frac{w_{m-1}}{m} \big\rfloor = \big\lfloor \frac{w_1}{m} \big\rfloor + \big\lfloor \frac{w_2}{m} \big\rfloor, $
then $n_L \geq 3$.
\end{lem}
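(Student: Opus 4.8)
The plan is to exhibit three explicit elements of $S$ lying in the interval $I_L=[Lm,(L+1)m-1]$ and strictly below $f$. First I would fix notation adapted to $I_L$. Since $w_{m-1}=f+m$ and $f=Lm+\rho-1$, we have $w_{m-1}=(L+1)m+(\rho-1)$, so $\lfloor w_{m-1}/m\rfloor=L+1$ and the hypothesis reads $\lfloor w_1/m\rfloor+\lfloor w_2/m\rfloor=L+1$. Writing $w_i=\lfloor w_i/m\rfloor\,m+r_i$ with residues $r_i=w_i \bmod m$, the elements $w_1,w_2$ are nonzero Apéry elements and hence not multiples of $m$, so $r_1,r_2\in\{1,\ldots,m-1\}$; moreover $r_1\ne r_2$, since distinct Apéry elements lie in distinct residue classes. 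The three candidate elements will be $Lm$, $Lm+r_1$ and $Lm+r_2$.

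The heart of the argument is to push the residues $r_1,r_2$ to the left of $\rho-1$. For this I would use the inequality $w_1+w_2\leq w_{\nu+1}$ obtained inside the proof of Lemma \ref{nok} (available because $m-\nu\geq 2$); as $\nu+1\leq m-1$, the Apéry elements are increasing and this gives $w_1+w_2\leq w_{m-1}$. Using the hypothesis, $w_1+w_2=(\lfloor w_1/m\rfloor+\lfloor w_2/m\rfloor)m+(r_1+r_2)=(L+1)m+(r_1+r_2)$, and comparing with $w_{m-1}=(L+1)m+(\rho-1)$ the inequality $w_1+w_2\leq w_{m-1}$ forces $r_1+r_2\leq\rho-1$ (in particular $r_1+r_2<m$, so no carry occurs and the displayed expression is legitimate). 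Since $r_1,r_2\geq 1$ this yields $r_1\leq\rho-2$ and $r_2\leq\rho-2$, and in passing it shows $\rho\geq 3$. Finally, from $\lfloor w_1/m\rfloor+\lfloor w_2/m\rfloor=L+1$ together with $\lfloor w_i/m\rfloor\geq 1$ I obtain $\lfloor w_1/m\rfloor\leq L$ and $\lfloor w_2/m\rfloor\leq L$.

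It then remains only to check the three elements. The integers $Lm,\,Lm+r_1,\,Lm+r_2$ are distinct because their residues $0,r_1,r_2$ are distinct, they all lie in $I_L$ (their residues are at most $\rho-2\leq m-2$), and they are all $<f=Lm+\rho-1$ since those residues are $\leq\rho-2$. Each belongs to $S$: $Lm$ is a multiple of $m$, while $Lm+r_i\equiv w_i\pmod m$ and $Lm+r_i\geq w_i$ (because $\lfloor w_i/m\rfloor\leq L$), so $Lm+r_i\in S$ by the defining property of the Apéry set. Hence $n_L\geq 3$.

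The step I expect to be the main obstacle is exactly the passage from the size inequality $w_1+w_2\leq w_{m-1}$ to the residue inequality $r_1+r_2\leq\rho-1$: in general a carry in $r_1+r_2$ would wreck the comparison of residues, and it is precisely the equality hypothesis $\lfloor w_{m-1}/m\rfloor=\lfloor w_1/m\rfloor+\lfloor w_2/m\rfloor$ that rules this out by pinning $\lfloor(w_1+w_2)/m\rfloor$ to $L+1$. Everything else is routine bookkeeping on $\mathrm{Ap}(S)$.
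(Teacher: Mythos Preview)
Your proof is correct and follows essentially the same approach as the paper: both use the inequality $w_1+w_2\leq w_{\nu+1}\leq w_{m-1}$ (borrowed from the proof of Lemma~\ref{nok}) together with the hypothesis $\lfloor w_1/m\rfloor+\lfloor w_2/m\rfloor=L+1$ to force $r_1+r_2\leq\rho-1$, and then exhibit the three elements $Lm$, $Lm+r_1$, $Lm+r_2$ in $S\cap[Lm,f)$. Your write-up is a bit more explicit about why $\lfloor w_i/m\rfloor\leq L$ and about the absence of a carry, but the underlying argument is identical.
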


\begin{proof}
Set $w_1=q_1 m +r_1,\, w_2=q_2 m + r_2$, where
$q_1= \big\lfloor \frac{w_1}{m} \big\rfloor, \, q_2= \big\lfloor \frac{w_2}{m} \big\rfloor, \, 0<r_1, r_2<m.$
From the previous lemma and the hypothesis we get
\[  \Big\lfloor \frac{w_1}{m} \Big\rfloor + \Big\lfloor \frac{w_2}{m} \Big\rfloor = \Big\lfloor \frac{w_1+w_2}{m} \Big\rfloor = \Big\lfloor \frac{w_{\nu +1}}{m} \Big\rfloor = \Big\lfloor \frac{w_{m-1}}{m} \Big\rfloor = L+1 \]
and hence $(L+1)m \leq w_1 + w_2 \leq w_{\nu+1} \leq w_{m-1} = f+m=(L+1)m + \rho-1$.
By the equality
$  \big\lfloor \frac{w_1}{m} \big\rfloor + \big\lfloor \frac{w_2}{m} \big\rfloor = \big\lfloor \frac{w_1+w_2}{m} \big\rfloor$
we obtain $r_1+r_2 <  m$,
while by the inequality $(L+1)m \leq w_1 + w_2 \leq  (L+1)m+\rho-1$ we obtain $r_1+r_2 \leq \rho-1$, 
and in particular $r_1 < \rho$ and $r_2 < \rho$. 
It follows that $\{Lm,\, w_1+k_1m,\, w_2+k_2m\} \subseteq [Lm,f] \cap S$ for some $k_1,k_2 \in \mathbb{N}$, 
and so $n_L\geq 3$.
\end{proof}

\begin{lem}\label{box}
Let us suppose $m-\nu \geq 3$, 
then $w_2 < f$.
\end{lem}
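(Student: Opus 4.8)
The plan is to prove the equivalent inequality $w_{m-1} > w_2 + m$, since $f = w_{m-1} - m$. The whole argument reduces to obtaining one good lower bound for the top Apéry element $w_{m-1}$ and then peeling off a copy of $m$.

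First I would establish the key estimate $w_{m-1} \geq w_1 + w_2$, which is essentially already contained in the proof of Lemma \ref{nok}. Indeed, since $m - \nu \geq 3 \geq 2$ there are at least two non-generators in $\ap(S) \setminus \{0\}$; letting $u < v$ be the two smallest such elements, the counting of generators gives $v \leq w_{\nu+1}$, while writing $v = w_j + w_k$ with $j,k \geq 1$ (via Lemma \ref{easy}) yields $v \geq w_1 + w_2$. Hence $w_{\nu+1} \geq w_1 + w_2$. Because $m - \nu \geq 3$ forces $\nu + 1 \leq m - 2 < m - 1$, the index $\nu+1$ does not reach the top, so $w_{m-1} \geq w_{\nu+1} \geq w_1 + w_2$.

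Second I would invoke that $w_1 = g_2 > g_1 = m$, so $w_1 \geq m + 1 > m$. Combining with the previous step gives $w_{m-1} \geq w_1 + w_2 > m + w_2$, which rearranges to $f = w_{m-1} - m > w_2$, i.e. $w_2 < f$, as claimed. Note that this uses $m-\nu \geq 2$ only to place $w_{\nu+1}$ below the top; the hypothesis $m - \nu \geq 3$ is comfortably sufficient.

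The single delicate point — the main obstacle, such as it is — is making the un-floored bound $w_{\nu+1} \geq w_1 + w_2$ fully rigorous, since Lemma \ref{nok} as stated records only the floored consequence $\lfloor \frac{w_{\nu+1}}{m} \rfloor \geq \lfloor \frac{w_1}{m} \rfloor + \lfloor \frac{w_2}{m} \rfloor$. I would therefore either cite the sharper inequality directly from the body of that proof, or simply reproduce its one line (smallest two non-generators, each a sum of two Apéry elements bounded below by $w_1 + w_2$ and above by $w_{\nu+1}$). Everything after that is a direct substitution, so no genuine computation is required.
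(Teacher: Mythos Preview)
Your proof is correct and takes a genuinely different route from the paper's.

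The paper argues by contradiction: assuming $w_2>f$ forces every $w_i$ with $i\geq 2$ above the Frobenius number, hence pairwise incomparable in $\ap(S)\setminus\{0\}$; then the only possible non-generator among them is $2w_1$, giving $\nu-1\geq m-3$ and contradicting $m-\nu\geq 3$. Your argument is direct: you pull the un-floored inequality $w_{\nu+1}\geq w_1+w_2$ out of the proof of Lemma~\ref{nok}, combine it with $w_{m-1}\geq w_{\nu+1}$ and the trivial $w_1=g_2>m$, and read off $f=w_{m-1}-m>w_2$. This is shorter, reuses work already done, and (as you observe) actually goes through under the weaker hypothesis $m-\nu\geq 2$. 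The paper's approach, on the other hand, is self-contained and yields a structural picture of $\ap(S)$ in the forbidden case, which is of some independent interest even if not reused later.

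One small point of precision: writing $v=w_j+w_k$ with $j,k\geq 1$ only gives $v\geq 2w_1$, not $v\geq w_1+w_2$; the extra ingredient is that $u<v$ and $u\geq 2w_1$ force $v\neq 2w_1$, hence $(j,k)\neq(1,1)$. You correctly defer this to the body of the proof of Lemma~\ref{nok}, where it is spelled out, but your one-line summary (``each \ldots bounded below by $w_1+w_2$'') slightly overstates what holds for $u$. This does not affect the validity of your argument.
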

\begin{proof}
If $w_2 > f$, 
then $w_i > f$ for each $i =2, \ldots, m-1$.
We claim that  $w_2, \ldots, w_{m-1}  \in \maxap(S)$.
Indeed if $w_i \preceq w_j$ for some $j> i \geq 2$,
then there exists $s \in S, \, s>0,$ such that $w_j=w_i+s$.
We have $s\geq m$, $w_i\geq f+1$, hence $w_j\geq f+m+1$ and $w_j-m \in S$,
 in contradiction with $w_j \in \ap(S)$.
Thus the elements $\{w_2,\ldots, w_{m-1}\}$ are pairwise incomparable with respect to $\preceq$.
It follows that, 
for each $j \geq 2$, 
 $w_j \notin \minap(S)$ if and only if $w_1 \preceq w_j$.
But this may occur for at most one index $j$:
if $w_1 \preceq w_j$, then $w_j=w_1+s$, with $s\in S\setminus\{0\}$.
By Lemma \ref{easy}, $s \in \ap(S)\setminus\{0\}$ and the only possibility is $s=w_1$ and hence $w_j=2w_1$. 
Thus at most one element among $\{w_2, \ldots , w_{m-1}\}$ may not be minimal.
We have proved that $m-3 \leq \nu-1$ and so $m - \nu \leq 2$, absurd.
\end{proof}

\begin{lem}\label{mos}
Let us suppose $m-\nu \geq 3$. 
If $n_L=1$, 
then there are two possibilities:
\begin{enumerate}
\item[(1)]  $n_{L-1} \geq 4$;

\item[(2)] $n_{L-1}=3, \, m-\nu=3 \mbox{ and }  \rho \leq m-2$.
\end{enumerate}
\end{lem}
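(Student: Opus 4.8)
The plan is to translate the hypothesis $n_L=1$ into information about the level $\lfloor w_i/m\rfloor$ of each Apéry element (the index $k$ with $w_i\in I_k$), and then to pin down the non-generators. First I would record two counting identities. As in the proof of Proposition \ref{mas}, for $k\le L-1$ one has $n_k=|S\cap I_k|=\#\{i:\lfloor w_i/m\rfloor\le k\}$ by Proposition \ref{silence}(ii). Moreover the conductor tail $[f+1,(L+1)m-1]\subseteq I_L$ consists of $m-\rho$ elements of $S$, so $|S\cap I_L|=n_L+(m-\rho)$; comparing with $|S\cap I_L|=\#\{i:\lfloor w_i/m\rfloor\le L\}=m-c$, where $c=\#\{i:\lfloor w_i/m\rfloor=L+1\}$, gives $n_L=\rho-c$, hence $c=\rho-1$.

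Next, since $n_L=1<3$, Lemma \ref{make} forbids $\lfloor w_{m-1}/m\rfloor=\lfloor w_1/m\rfloor+\lfloor w_2/m\rfloor$, so Lemma \ref{nok} together with the monotonicity of the levels forces $L+1=\lfloor w_{m-1}/m\rfloor\ge\lfloor w_1/m\rfloor+\lfloor w_2/m\rfloor+1$. As $w_1=g_2>m$ we have $\lfloor w_1/m\rfloor\ge 1$, whence $\lfloor w_2/m\rfloor\le L-1$ and $L\ge 2$. Thus $w_0,w_1,w_2$ all have level $\le L-1$ and $n_{L-1}\ge 3$. If $w_3$ also has level $\le L-1$ we are in case (1); so assume $n_{L-1}=3$, i.e. $w_0,w_1,w_2$ are the only Apéry elements of level $\le L-1$. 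Since only these three sit below level $L$ we get $c\le m-3$, so $\rho-1\le m-3$, that is $\rho\le m-2$; it remains to prove $m-\nu=3$.

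The crux is a residue constraint. From $n_L=1$ the integers $Lm+1,\dots,Lm+\rho-1=f$ are all gaps, so the classes $1,\dots,\rho-1$ modulo $m$ have their least representative only at level $L+1$; combined with $n_{L-1}=3$ this shows that the residues $r_1,r_2$ of $w_1,w_2$ lie in $\{\rho,\dots,m-1\}$. I would then check that for every Apéry element $u$ of level $\ge L$ both $u-w_1$ and $u-w_2$ have level $\le L-1$: if $u\in I_L$ this follows from $w_1,w_2\ge m$, while if $u\in I_{L+1}$, say $u=(L+1)m+j$ with $1\le j\le\rho-1$, it follows from $w_i\ge m+r_i\ge m+\rho>m+j$, which makes the level drop by one. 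Now $L\ge 2$ gives $u\le w_{m-1}<2Lm$, so in any decomposition $u=w'+w''$ of a non-generator into two nonzero Apéry elements (Lemma \ref{easy}) the smaller factor has level $\le L-1$, hence equals $w_1$ or $w_2$; the complementary factor $u-w_1$ or $u-w_2$ is then an Apéry element of level $\le L-1$, i.e. again $w_1$ or $w_2$. Therefore every non-generator of level $\ge L$ lies in $\{2w_1,\,w_1+w_2,\,2w_2\}$.

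Finally I would count. The only nonzero Apéry element of level $\le L-1$ that can fail to be a generator is $w_2$, and only if $w_2=2w_1$. If $w_2=2w_1$, then $2w_1$ already sits below level $L$, so the non-generators of level $\ge L$ lie in $\{w_1+w_2,2w_2\}$ and there are at most $1+2=3$ non-generators in all; if $w_2\ne 2w_1$ there is no low non-generator and at most $3$ high ones. Either way the number $m-\nu$ of nonzero non-generators is $\le 3$, which with $m-\nu\ge 3$ gives $m-\nu=3$, finishing case (2). I expect the main obstacle to be the residue step of the third paragraph: establishing $r_1,r_2\ge\rho$ and then the level-drop of $u-w_1,u-w_2$ at level $L+1$, powered by $j<\rho\le r_1,r_2$. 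This is exactly what confines the high non-generators to the three values $2w_1,w_1+w_2,2w_2$; without it the count would only yield $m-\nu\le 4$.
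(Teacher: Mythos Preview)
Your argument is correct, but it is considerably more elaborate than the paper's. The paper does not go through Lemmas~\ref{nok} and~\ref{make} at all here; it simply invokes Lemma~\ref{box} to get $w_2<f$, and from $n_L=1$ concludes that $w_1,w_2$ lie in intervals $I_{h_1},I_{h_2}$ with $h_1,h_2<L$, giving $n_{L-1}\ge 3$ immediately. Assuming $n_{L-1}=3$, the paper then observes (via the counting in Proposition~\ref{mas}) that $w_3>Lm$, hence $w_3>f$ because $[Lm,f]\cap S=\{Lm\}$. The crux is then a one-line argument: if a nonzero Ap\'ery element $w$ is not a generator, write $w=w_i+w_j$ with $i,j>0$; if $i\ge 3$ then $w\ge w_3+w_1>f+m$, contradicting $w\in\ap(S)$. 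So $i,j\le 2$ and $w\in\{2w_1,\,w_1+w_2,\,2w_2\}$, whence $m-\nu\le 3$. The bound $\rho\le m-2$ is obtained by noting that $w_1+(k_1+1)m$ and $w_2+(k_2+1)m$ are two distinct elements of $I_L\cap S$ strictly above $f$.

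Your route replaces the single observation ``$w_3>f$'' by a residue/level analysis: you first establish $r_1,r_2\ge\rho$, then show a level-drop $u-w_i<Lm$ for Ap\'ery $u$ at level $L$ or $L+1$, and combine this with the bound $u<2Lm$ (from $L\ge 2$) to force both summands into $\{w_1,w_2\}$. This is sound, and your derivation of $\rho\le m-2$ via the count $c=\rho-1\le m-3$ is a nice alternative to the paper's endpoint argument. But the paper's approach is shorter because ``$w_3>f$'' kills any summand $w_i$ with $i\ge3$ in one stroke, making the residue step (which you correctly flag as the delicate point) unnecessary.
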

\begin{proof}
By the previous lemma, 
we have $w_1 < w_2 < f$.
Since $n_L=1$ it follows that $\{Lm, \ldots, f\}\cap S = \{Lm\}$ and thus
$w_1 \in I_{h_1},\, w_2 \in I_{h_2}$ for some indexes $h_1,h_2< L$.

Let us suppose $n_{L-1}=|I_{L-1} \cap S|\leq 3$. 
We have that $(L-1)m,\, w_1+k_1m,\, w_2+k_2m \in I_{L-1} \cap S$ for suitable  $k_1,k_2 \in \mathbb{N}$
and so $n_{L-1}=3$.
Recalling the argument of the proof of Proposition \ref{mas}, 
the fact that $n_{L-1}=|I_{L-1}\cap S|=3$ implies $w_3> Lm$;
since $\{Lm, \ldots, f\}\cap S = \{Lm\}$ we actually have $w_3 >f$.

We want to show,
similarly to what we have done within the proof of Lemma \ref{box},
that the only possible non-zero elements in $\ap(S) \setminus \minap(S)$ are
$2w_1,\, w_1+w_2,\, 2w_2$:
in this case we obtain the second assertion $m-\nu = 3$.
If $w \in \ap(S) \setminus\{0\}$, $w \notin \minap(S)$, then we have $w=w_i+w_j$, with $i,\,j >0$
(here we use again Lemma \ref{easy}).
Now if one of the two indexes is greater than 2, 
for example $i > 2$,
by $w_i \geq w_3 \geq f+1$ and $w_j > m$ it follows $w\geq f+m+1$ and $w-m \in S$, 
in contradiction with $w \in \ap(S)$.
Thus $i,\,j \leq 2$ and the only possible non-minimal elements are $\{2w_1, w_1+w_2, 2w_2\}$.

Finally, 
since $w_1+(k_1+1)m,\, w_2+(k_2+1)m \in I_{L}$ and $n_L=1$, we must have 
$f < w_1+(k_1+1)m < (L+1)m $ and $ f < w_2+(k_2+1)m  < (L+1)m$,
hence $f< (L+1)m-2$  and  $\rho \leq m-2$.
\end{proof}

We are ready to prove our main result.

\begin{thm}\label{god}
If $2 \nu(S) \geq m(S)$, then $S$ satisfies Wilf's conjecture.
\end{thm}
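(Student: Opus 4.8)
The plan is to pass to the reformulation of Proposition~\ref{equiv} and then exploit $2\nu\geq m$ to control the signs of the summands. First I would dispose of the small cases: by Lemma~\ref{unfor} I may assume $m-\nu\geq 3$, and since $2\nu\geq m$ is equivalent to $m-\nu\leq\nu$, I work under the standing hypothesis $3\leq m-\nu\leq\nu$ (so in particular $\nu\geq 3$). By Proposition~\ref{equiv} it suffices to prove
\[ \sum_{k=0}^{L-1}(n_k\nu-m)+(n_L\nu-\rho)\geq 0, \]
and by Remark~\ref{ali} the first sum equals $\sum_{j=1}^{m-1}\epsilon_j(j\nu-m)$.

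The key elementary observation is that $2\nu\geq m$ makes $j\nu-m\geq 0$ for every $j\geq 2$; hence the \emph{only} negative contribution comes from the term $\epsilon_1(\nu-m)$, whose absolute value is the total deficit $\epsilon_1(m-\nu)$ produced by the intervals that contain a single element of $S$. The whole problem thus reduces to showing that the surplus from the intervals with at least two elements, together with the last term $n_L\nu-\rho$, outweighs this deficit. Since $n_k$ is non-decreasing (Proposition~\ref{silence}), the $\epsilon_1$ one-element intervals form the initial block $I_0,\dots,I_{\epsilon_1-1}$, and $\epsilon_1=\big\lfloor\frac{w_1}{m}\big\rfloor$ up to the correction of Lemma~\ref{sheep}; a large deficit therefore corresponds to $w_1=g_2$ being large, which via Lemma~\ref{nok} (giving $\big\lfloor\frac{w_1}{m}\big\rfloor+\big\lfloor\frac{w_2}{m}\big\rfloor\leq\big\lfloor\frac{w_{\nu+1}}{m}\big\rfloor\leq L+1$) forces the later appearance of the remaining Ap\'ery elements and hence produces high-count intervals further on.

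I would then run a case analysis on $n_L$. When $n_L\geq 3$ the last term $n_L\nu-\rho\geq 3\nu-m>0$ is already comfortably positive, and the maximally compressed configuration of the Ap\'ery set is exactly the hypothesis of Lemma~\ref{make}, which guarantees $n_L\geq 3$; so the borderline configurations land here. When $n_L\leq 2$ I would invoke Lemma~\ref{box} to know $w_2<f$ (so that $w_0,w_1,w_2$ all lie below the last interval and force an early jump in the counts $n_k$), and Lemma~\ref{mos} to conclude that $n_L=1$ forces either $n_{L-1}\geq 4$ or the tightly constrained case $n_{L-1}=3,\ m-\nu=3,\ \rho\leq m-2$. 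In each branch a single high-count interval $I_{L-1}$, whose surplus is $n_{L-1}\nu-m$, together with the sharpened bound on $\rho$, absorbs the deficit and yields the required inequality.

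The main obstacle is precisely the bookkeeping of the middle step: the deficit $\epsilon_1(m-\nu)$ is unbounded as $g_2$ grows, so it cannot be paid off interval-by-interval using only the crude estimate $j\nu-m\geq 2\nu-m$ for $j\geq 2$. One must genuinely use that a large $g_2$ forces the counts $n_k$ to jump by a correspondingly large amount once the small Ap\'ery elements appear, so that the few high-count intervals carry enough surplus to compensate. Quantifying this matching of deficit against surplus—rather than the sign observation, which is immediate—is the delicate heart of the argument, and the technical Lemmas~\ref{nok}, \ref{make}, \ref{box} and \ref{mos} are tailored to supply exactly the inequalities that make the compensation go through.
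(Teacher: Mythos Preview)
Your proposal is correct and follows essentially the same route as the paper: the same reduction via Proposition~\ref{equiv} and Remark~\ref{ali}, the same sign observation that $2\nu\geq m$ makes the $j\geq 2$ terms non-negative so that only $\epsilon_1(\nu-m)$ must be compensated, and the same case analysis on $n_L$ governed by Lemmas~\ref{nok}, \ref{make}, \ref{box} and \ref{mos}. The paper's proof differs only in bookkeeping: it splits cases first on whether equality holds in Lemma~\ref{nok} (rather than directly on $n_L$) and makes explicit the algebraic regrouping $\big\lfloor\tfrac{w_1}{m}\big\rfloor(\nu-m)+(\cdots)(3\nu-m)=(\cdots)(3\nu-m)+\big\lfloor\tfrac{w_1}{m}\big\rfloor(4\nu-2m)$, which is precisely the ``matching of deficit against surplus'' you identify as the heart of the argument.
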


\begin{proof}
By Lemma \ref{unfor} we may assume $m - \nu \geq 3$.
We want to proceed as suggested in Remark \ref{ali}:
we will count the intervals with 1 element of $S$ and those with at least 3 elements.
The hypothesis $2\nu \geq m$ allows us to leave out those with 2 elements:
their contribution to the sum in \ref{formula}, 
according to the content of Remark \ref{ali}, 
is $\epsilon_2 (2\nu-m)$ and hence non-negative by hypothesis.

Using Proposition \ref{mas} we find:
\[ \eta_1= \Big\lfloor \frac{w_1}{m} \Big \rfloor - \Big\lfloor \frac{w_0}{m} \Big \rfloor = \Big\lfloor \frac{w_1}{m} \Big \rfloor\]
because $w_0=0$; moreover
\[\sum_{j=3}^{m-1} \eta_j = \sum_{j=3}^{m-1}\Big( \Big\lfloor \frac{w_j}{m} \Big \rfloor - \Big\lfloor \frac{w_{j-1}}{m} \Big \rfloor \Big)= \sum_{j=3}^{m-1}\Big\lfloor \frac{w_j}{m} \Big \rfloor - \sum_{j=2}^{m-2} \Big\lfloor \frac{w_j}{m} \Big \rfloor= \Big\lfloor \frac{w_{m-1}}{m} \Big \rfloor - \Big\lfloor \frac{w_{2}}{m} \Big \rfloor.\]

Applying Remark \ref{ali}, Lemma \ref{sheep} and the above formulas to inequality \ref{formula} we get:
\begin{eqnarray*}
 \sum_{k=0}^{L-1}(n_k\nu-m) + (n_L \nu -\rho)  = \sum_{j=0}^{m-1} \epsilon_j ( j \nu -m ) + (n_L\nu -\rho)\geq\\
   \eta_1 (\nu -m) + \Big(\sum_{j=3}^{m-1} \eta_j -1\Big)(3\nu -m) + (n_L\nu-\rho)=\\
\Big\lfloor \frac{w_1}{m}  \Big\rfloor (\nu-m) + \Big(\Big\lfloor \frac{w_{m-1}}{m} \Big\rfloor - \Big\lfloor \frac{w_2}{m}  \Big\rfloor-1\Big)
(3\nu-m) +  (n_L\nu-\rho) =\\
\Big(\Big\lfloor \frac{w_{m-1}}{m} \Big\rfloor - \Big\lfloor \frac{w_2}{m}  \Big\rfloor- \Big\lfloor \frac{w_1}{m}  \Big\rfloor - 1 \Big)
(3\nu-m) + \Big\lfloor \frac{w_1}{m}  \Big\rfloor (4\nu-2m) + (n_L\nu-\rho). 
\end{eqnarray*}

Let us distinguish now three possible cases.
\begin{itemize}

\item
The equality holds in Lemma \ref{nok}.
\\
By Lemma \ref{make} we have $n_L \geq 3$;
futhermore $(4\nu-2m) \geq 0$ and we can leave it out. 
We obtain
\[ -(3\nu-m)+(n_L\nu-\rho)=(n_L-3)\nu+(m-\rho) \geq 0.\]

\item
The strict inequality holds in Lemma \ref{nok} and $n_L \geq 2$.
\\
We obtain
\[  \Big(\Big\lfloor \frac{w_{m-1}}{m} \Big\rfloor - \Big\lfloor \frac{w_2}{m}  \Big\rfloor- \Big\lfloor \frac{w_1}{m}  \Big\rfloor - 1 \Big)
(3\nu-m) + \Big\lfloor \frac{w_1}{m}  \Big\rfloor (4\nu-2m) + (n_L\nu-\rho) \geq 0 \]
since $n_L \nu - \rho \geq 2 \nu - m \geq 0$ and    all parts considered are non-negative.
\item 
The strict inequality holds in Lemma \ref{nok} and $n_L = 1$.
\\
By Lemma \ref{mos} we have either $n_{L-1} \geq 4$ or $n_{L-1}=3,\,m-\nu=3$ and $ \rho \leq m-2$.
In the first case, 
we need to add $\nu$ to the sum (corresponding to the interval $I_{L-1}$) and we obtain 
\[\nu +\Big\lfloor \frac{w_1}{m}  \Big\rfloor (4\nu-2m) + (\nu-\rho) \geq 2 \nu - \rho \geq 2 \nu - m \geq 0.\]
In the second case
we may assume $m \geq 7$ by Corollary \ref{ten}. 
We obtain:
\begin{eqnarray*}
 \Big\lfloor \frac{w_1}{m} \Big\rfloor (4\nu-2m) + (\nu - \rho) = \Big\lfloor \frac{w_1}{m} \Big\rfloor (4(m-3)-2m) + (m-3 - \rho) \geq \\
\Big\lfloor \frac{w_1}{m} \Big\rfloor(2m-12) + (m-3-m+2) \geq  (2m-12)-1= 2m - 13  >0.
\end{eqnarray*}

\end{itemize}
The inequality is valid in each case and the thesis is thus proved.
\end{proof}

From the previous theorem we immediately get the following result:

\begin{cor}\label{fail}
If $m(S) \leq 8$,
then $S$ satisfies Wilf's conjecture.
\end{cor}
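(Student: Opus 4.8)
The plan is to derive Corollary~\ref{fail} from Theorem~\ref{god} by a simple case split on the multiplicity, exactly as the phrase ``we immediately get'' suggests. Since Theorem~\ref{god} covers every semigroup with $2\nu \geq m$, the only values of $m$ that require separate attention are those for which a semigroup could have $2\nu < m$, i.e.\ $\nu < m/2$. For $m \leq 6$ there is nothing to do: Corollary~\ref{ten} already establishes Wilf's conjecture for all such semigroups. Hence the real content is confined to $m = 7$ and $m = 8$, and I would dispatch these two remaining cases by showing that any semigroup with $2\nu < m$ in this range is \emph{also} covered by Theorem~\ref{god} or by the earlier lemmas.

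The key step is to observe how narrow the gap $2\nu < m$ actually is for small $m$. For $m = 7$, the inequality $2\nu < 7$ forces $\nu \leq 3$, which is settled by Corollary~\ref{frag}. For $m = 8$, the inequality $2\nu < 8$ forces $\nu \leq 3$, again handled by Corollary~\ref{frag}. So I would first record these bounds, then invoke the appropriate earlier result in each subcase. Concretely, for $m \in \{7,8\}$ one checks that either $2\nu \geq m$, in which case Theorem~\ref{god} applies directly, or $2\nu < m$, in which case $\nu \leq 3$ and Corollary~\ref{frag} applies. This exhausts all semigroups with $m \leq 8$.

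I would write the proof as follows.
\begin{proof}
If $m \leq 6$, the claim is Corollary~\ref{ten}, so we may assume $m \in \{7,8\}$. For such a semigroup, either $2\nu \geq m$ or $2\nu < m$. In the former case, Wilf's conjecture holds by Theorem~\ref{god}. In the latter case, $2\nu < m \leq 8$ forces $\nu \leq 3$, and the conjecture holds by Corollary~\ref{frag}. In either case $S$ satisfies Wilf's conjecture.
\end{proof}

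The only potential obstacle is a bookkeeping one rather than a mathematical one: I must be sure that the threshold $2\nu < m$ genuinely collapses into the already-treated low-embedding-dimension regime for every $m$ up to $8$, with no intermediate value of $\nu$ slipping through. The tightest point is $m = 8$, where $2\nu < 8$ leaves room only for $\nu \in \{2,3\}$ (recall $\nu \geq 2$ since $S \neq \mathbb{N}$), both within the scope of Corollary~\ref{frag}; this confirms there is no residual case. Thus the corollary follows with no further computation, and the apparent strengthening from $m \leq 6$ to $m \leq 8$ is precisely the improvement that Theorem~\ref{god} buys us over Corollary~\ref{ten}.
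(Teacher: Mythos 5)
Your proof is correct and follows essentially the same dichotomy as the paper: for $m \leq 8$, either $\nu \leq 3$ (Corollary~\ref{frag}) or $2\nu \geq m$ (Theorem~\ref{god}). The only cosmetic difference is your preliminary appeal to Corollary~\ref{ten} for $m \leq 6$, which is harmless but unnecessary since the same case split already covers that range.
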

\begin{proof}
If $m\leq 8$ then we get either $\nu \leq 3 $ or $2 \nu \geq m$; 
the thesis follows from Corollary \ref{frag} and  Theorem \ref{god}.
\end{proof}

We conclude our paper showing another class of numerical semigroups satisfying the conjecture,
which is actually independent of most results of the paper.
A semigroup generated by a \emph{generalized arithmetic sequence} is a semigroup of the kind
$S= \langle m, hm+d, hm+2d, \ldots, hm+ ld\rangle$;
for our purpose, we may assume
$\gcd(m,d)=1,\, m \geq 2,\, l \leq m-2 $.
Such semigroups have been studied in \cite{Ma}.
\begin{prop}\label{gener}
If $S$ is a semigroup generated by a generalized arithmetic sequence, 
then $S$ satisfies Wilf's conjecture.
\end{prop}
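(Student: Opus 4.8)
The plan is to make everything explicit through the Apéry set, exploiting the special shape of the generators (and using $h\ge 1$, so that $m$ is the multiplicity). First I would compute $\ap(S)$ with respect to $m$. Writing a general element of $S$ as $\lambda_0 m + \sum_{i=1}^{l}\lambda_i(hm+id)$ and setting $k=\sum_i\lambda_i$, $t=\sum_i i\lambda_i$, its value is $(\lambda_0+hk)m+td$ and its class modulo $m$ is $td \bmod m$. Since $\gcd(m,d)=1$, as $t$ runs over $\{0,1,\dots,m-1\}$ the residues $td$ cover each class exactly once; and for a fixed weighted sum $t\ge 1$ the smallest attainable value is $h\lceil t/l\rceil m+td$, obtained with $\lambda_0=0$ and the least $k$ with $lk\ge t$, namely $k=\lceil t/l\rceil$. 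As this is increasing in $t$, the minimal element in each nonzero class is reached at the least admissible $t$, so
\[ \ap(S)=\big\{\, w_t=\lceil t/l\rceil\, hm+td \ :\ t=0,1,\dots,m-1\,\big\},\qquad w_0<w_1<\cdots<w_{m-1}. \]

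From this description I would read off the invariants entering Wilf's inequality. The embedding dimension is $\nu=l+1$: the level-one elements $w_1,\dots,w_l$ are exactly $hm+d,\dots,hm+ld$, while for $t>l$ one has $w_t=(hm+ld)+w_{t-l}$, so $w_t\notin\minap(S)$; and no two level-one elements are comparable, because $w_t-w_s=(t-s)d$ lies in a nonzero class (here $\gcd(m,d)=1$ is essential) and is strictly smaller than the minimal element $w_{t-s}$ of that class, hence not in $S$. The Frobenius number is $f=w_{m-1}-m=phm+(m-1)d-m$, where $p=\lceil(m-1)/l\rceil$. Finally, counting $S\cap[0,f]$ class by class, the elements $w_j,w_j+m,\dots$ that do not exceed $f=w_{m-1}-m$ number $\lfloor(w_{m-1}-w_j)/m\rfloor$ in the class of $w_j$, whence
\[ n(S)=\sum_{j=0}^{m-1}\Big\lfloor\frac{w_{m-1}-w_j}{m}\Big\rfloor=h\Big(p+l\binom{p}{2}\Big)+\frac{(d-1)(m-1)}{2}, \]
using $\sum_{j=0}^{m-1}\big(p-\lceil j/l\rceil\big)=p+l\binom{p}{2}$ and the classical identity $\sum_{i=1}^{m-1}\lfloor id/m\rfloor=\tfrac{(d-1)(m-1)}{2}$, valid since $\gcd(m,d)=1$ (pair $i$ with $m-i$).

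With $\nu=l+1$ fixed, Wilf's inequality $f+1\le\nu\,n$ becomes an explicit inequality in $h,l,p,d$ and $m=(p-1)l+b+1$ (with $1\le b\le l$). I would finish by two observations. The coefficient of $d$ on the two sides differs by $(l-1)(m-1)\ge 0$, so the inequality is nondecreasing in $d$ and it suffices to check it at $d=1$. At $d=1$ the count collapses to $n=h\big(p+l\binom{p}{2}\big)$ and, conveniently, $f+1=phm$; after dividing by $hp>0$ the inequality reduces to $(l+1)\big(2+l(p-1)\big)\ge 2m$, which holds because $2m=2(p-1)l+2b+2$ gives a difference equal to $2(l-b)+l(l-1)(p-1)\ge 0$.

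The main obstacle is the first step: proving the closed form for $\ap(S)$ rigorously, i.e. that $h\lceil t/l\rceil m+td$ is genuinely the \emph{minimal} element of $S$ in its class and that these exhaust the Apéry set. Everything downstream---the values of $\nu$, $f$ and $n$---is then bookkeeping, and the hypothesis $\gcd(m,d)=1$ is used exactly where needed: to make the residues $td$ distinct, and to force $(t-s)d\notin S$, which pins down $\nu=l+1$. I do not expect to need Proposition~\ref{equiv} or Proposition~\ref{where}; the argument is self-contained, as the statement anticipates.
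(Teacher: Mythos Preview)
Your argument is correct and genuinely different from the paper's. The paper never computes $n(S)$ or verifies Wilf's inequality directly; instead it quotes Matthews' formula $t(S)=m-\big\lfloor\frac{m-2}{l}\big\rfloor l-1$, observes that this forces $t(S)<l+1=\nu(S)$, and then invokes Proposition~\ref{where} (the Fr\"oberg--Gottlieb--H\"aggkvist bound $f+1\le n(t+1)$) to conclude. So the paper's proof is three lines but rests on two external results, whereas yours is entirely self-contained: you build the Ap\'ery set explicitly, read off $\nu$, $f$ and $n(S)$, and reduce Wilf's inequality to an elementary check in $l,p,b$. Your route yields more information (closed forms for all the invariants) at the cost of more bookkeeping; the paper's route is slicker but opaque unless one trusts the cited type formula. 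One small slip: the coefficient of $d$ in $\nu\,n(S)-(f+1)$ is $\tfrac{(l-1)(m-1)}{2}$, not $(l-1)(m-1)$; this is still nonnegative, so the monotonicity-in-$d$ reduction to $d=1$ is unaffected. Also note that your reading of ``independent of most results of the paper'' is slightly off: the paper \emph{does} use Proposition~\ref{where} here, it just avoids the interval-counting machinery of Section~2.
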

\begin{proof}
In (\cite{Ma}, Corollary 3.4) the author proved in particular that
$ t(S) = m - \big\lfloor \frac{m-2}{l} \big\rfloor l -1. $
By definition of $ \lfloor \cdot \rfloor$ we have:
\begin{eqnarray*}
 \frac{m-2}{l} < \Big\lfloor \frac{m-2}{l} \Big\rfloor +1 \Rightarrow m-2 < \Big\lfloor \frac{m-2}{l} \Big\rfloor l +l \Rightarrow \\
 t(S) = m - \Big\lfloor \frac{m-2}{l} \Big\rfloor l -1 < l+1 = \nu(S)
 \end{eqnarray*}
and the thesis follows from Proposition \ref{where}.
\end{proof}

\textbf{Acknowledgments.}
I would like to thank warmly Professor Marco D'Anna for some discussion on the subject and for introducing me to numerical semigroups.

\end{document}